\documentclass{amsart}

\usepackage{amsmath,enumerate,amsfonts}
\usepackage{amsthm}

\newtheorem{lemma}{Lemma}[section]

\newtheorem{theorem}[lemma]{Theorem}
\newtheorem{corollary}[lemma]{Corollary}
\theoremstyle{definition}

\newtheorem{conjecture}[lemma]{Conjecture}

\theoremstyle{remark}

\title[On a dimension formula for self-affine measures]{On Falconer's formula for the generalised R\'enyi dimension of a self-affine measure}
\author{Ian D. Morris}

\begin{document}

\maketitle

\begin{abstract}
We investigate a formula of K. Falconer which describes the typical value of the generalised R\'enyi dimension, or generalised $q$-dimension, of a self-affine measure  in terms of the linear components of the affinities. We show that in contrast to a related formula for the Hausdorff dimension of a typical self-affine set, the value of the generalised $q$-dimension predicted by Falconer's formula varies discontinuously as the linear parts of the affinities are changed. Conditionally on a conjecture of J. Bochi and B. Fayad, we show that the value predicted by this formula for pairs of two-dimensional affine transformations is discontinuous on a set of positive Lebesgue measure. These discontinuities derive from discontinuities of the lower spectral radius which were previously observed by the author and J. Bochi.

MSC codes: 28A80 (primary)
\end{abstract}


\section{Introduction}

If $\mathsf{T}:=(T_1,\ldots,T_N)$ is a finite collection of transformations of a complete metric space $X$, and each $T_i$ is a contraction in the sense that for some $\lambda<1$  one has $d(T_ix,T_iy) \leq \lambda d(x,y)$ for all $x,y \in X$, it is well-known that there exists a unique nonempty compact set $Z_{\mathsf{T}}\subseteq X$ such that
\[Z_{\mathsf{T}}=\bigcup_{i=1}^N T_i\left(Z_{\mathsf{T}}\right),\]
see for example \cite{Fa03,Hu81}. The case in which $X=\mathbb{R}^d$ and the transformations $T_i$ are affine  -- in which case $Z_{\mathsf{T}}$ is called a \emph{self-affine set} -- has been the subject of intense study over the last few decades (for a recent survey we note \cite{Fa13}). 

Various notions of fractal dimension have been investigated for both general and special classes of self-affine set. In certain special cases where the linear parts of the affinities preserve or permute the horizontal and vertical axes of $\mathbb{R}^2$, explicit formul{\ae}
for the Hausdorff dimension and box dimension exist (see e.g. \cite{Ba07,Be84,Fr12,LaGa92,Mc84}). In the context of general self-affine sets, a landmark result of K. Falconer \cite{Fa88} established the Hausdorff and box dimensions of ``typical'' self-affine sets in a sense which we now describe. Let $M_d(\mathbb{R})$ denote the set of all $d \times d$ real matrices, and recall that for $A \in M_d(\mathbb{R})$ we define the \emph{singular values} of $A$, denoted $\sigma_1(A),\ldots,\sigma_d(A)$, to be the non-negative square roots of the eigenvalues of the positive semi-definite matrix $A^*A$, listed in decreasing order. Let us define a function $\varphi \colon (0,+\infty) \times M_d(\mathbb{R}) \to [0,+\infty)$ by
\[\varphi^s(A):=\left\{\begin{array}{cl}\sigma_1(A)\cdots \sigma_{k}(A)\sigma_{k+1}(A)^{s-k},&k \leq s \leq k+1 \leq d\\
|\det A|^\frac{s}{d},&s \geq d,\end{array}\right.\]
and for every $A_1,\ldots,A_N$ in the open unit ball of $M_d(\mathbb{R})$ define the \emph{affinity dimension} or \emph{singularity dimension} of $(A_1,\ldots,A_N)$ to be the quantity
\[\mathfrak{s}(A_1,\ldots,A_N):=\inf\left\{s>0\colon  \sum_{n=1}^\infty \sum_{i_1,\ldots,i_n=1}^N \varphi^s\left(A_{i_1}\cdots A_{i_n}\right)<\infty\right\}.\]
Falconer showed that for any fixed invertible $d \times d$ matrices $A_1,\ldots,A_N$ with Euclidean norm strictly less than $\frac{1}{3}$, for Lebesgue almost all $v_1,\ldots,v_N \in \mathbb{R}^d$ the self-affine set $Z_{\mathsf{T}}$ associated to the collection of affine transformations $\mathsf{T}:=(T_1,\ldots,T_N)$ defined by $T_ix:=A_ix+v_i$ satisfies
\[\mathrm{dim}_H\left(Z_{\mathsf{T}}\right) =\mathrm{dim}_B\left(Z_{\mathsf{T}}\right)  =\min\left\{\mathfrak{s}(A_1,\ldots,A_N),d\right\},\]
see \cite{Fa88}; the bound on the norm was subsequently relaxed to $\frac{1}{2}$ by B. Solomyak \cite{So98}, and to $1$ by T. Jordan, K. Simon and M. Pollicott for a notion of ``almost self-affine set'' which incorporates additional random translations \cite{JoPoSi07}. While it is well-known that the Hausdorff dimension of $Z_{\mathsf{T}}$ can fail to depend continuously on the affinites $T_1,\ldots,T_N$ (see e.g. \cite[Example 9.10]{Fa03}), it was recently shown by D.-J. Feng and P. Shmerkin in \cite{FeSh14} that the affinity dimension $\mathfrak{s}$ is a continuous function of $(A_1,\ldots,A_N)$. An alternative proof of this statement was subsequently given by the author \cite{Mo16}. 

In this article we will focus not on self-affine sets but on self-affine \emph{measures}. A Borel probability measure $\mu$ on $\mathbb{R}^d$ is called self-affine if there exist a probability vector $\mathsf{p}=(p_1,\ldots,p_N)$ and a collection of affinities $\mathsf{T}=(T_1,\ldots,T_N)$ such that
\[\mu(A)=\sum_{i=1}^Np_i \mu\left(T_i^{-1}A\right)\]
for every Borel set $A \subseteq \mathbb{R}^d$. If $T_1,\ldots,T_N$ are contractions then for each probability vector $\mathsf{p}$ with all probabilities nonzero there exists a unique Borel probability measure $\mu_{\mathsf{p},\mathsf{T}}$ satisfying the above functional equation (see e.g. \cite[Theorem 2.8]{Fa97}), and the support of that measure is equal to the associated self-affine set $Z_{\mathsf{T}}$. In this article our interest is in the \emph{generalised $q$-dimension} or \emph{generalised R\'enyi dimension} of a self-affine measure, which is defined as follows. For each $r>0$ let $\mathcal{M}_r$ denote the set of all $r$-mesh cubes on $\mathbb{R}^d$, that is, the set of all $d$-dimensional cubes of the form $[j_1r,(j_1+1)r) \times [j_2r, (j_2+1)r) \times \cdots \times [j_dr,(j_d+1)r)$ where $j_1,\ldots,j_d \in \mathbb{Z}$. For $q > 1$ we define
\[M_r(q,\mu):= \sum_{C \in \mathcal{M}_r} \mu(C)^q\]
for every $r>0$, and
\[\underline{D}_q(\mu) := \liminf_{r \to 0} \frac{\log M_r(q,\mu)}{(q-1)\log r},\qquad\overline{D}_q(\mu) := \limsup_{r \to 0} \frac{\log M_r(q,\mu)}{(q-1)\log r}.\]
If $\underline{D}_q(\mu)$ and $\overline{D}_q(\mu)$ are equal then we define the generalised $q$-dimension of $\mu$ to be their common value and denote this by $D_q(\mu)$. For $q>1$ the generalised $q$-dimension admits an alternative expression as a limit of certain integrals \cite{La95}. In \cite[Theorem 6.2]{Fa99}, K. Falconer characterised the generalised R\'enyi dimensions of typical self-affine measures in a similar manner to his earlier characterisation of the Hausdorff and box dimensions of typical self-affine sets:
\begin{theorem}[Falconer]
Let $(A_1,\ldots,A_N)$ be invertible linear transformations of $\mathbb{R}^d$ such that $\|A_i\|<\frac{1}{2}$ for every $i$, let $\mathsf{p}=(p_1,\ldots,p_N)$ be a probability vector with all entries nonzero, and for each $q>1$ define $\mathfrak{r}_q(A_1,\ldots,A_N,\mathsf{p})$ to be the quantity
\[\sup\left\{s>0 \colon \sum_{n=1}^\infty \sum_{i_1,\ldots,i_n=1}^N\varphi^s(A_{i_1}\cdots A_{i_n})^{1-q} p_{i_1}^q \cdots p_{i_n}^q<\infty\right\}.\]
If $1<q\leq 2$ then for Lebesgue-almost-every $(v_1,\ldots,v_N)\in\mathbb{R}^{Nd}$ the self-affine measure $\mu_{\mathsf{p},\mathsf{T}}$ corresponding to the transformations $T_ix:=A_ix+v_i$ and the probability vector $\mathsf{p}$ satisfies $D_q(\mu_{\mathsf{p},\mathsf{T}})=\min\{\mathfrak{r}_q(A_1,\ldots,A_N,\mathsf{p}),d\}$. 
\end{theorem}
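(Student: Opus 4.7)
My plan follows Falconer's standard paradigm for such ``typical'' dimension results: a deterministic upper bound valid for every $v$, paired with an almost-sure lower bound established by Fubini and a transversality-type integration in the translation parameter. For the upper bound $\overline{D}_q(\mu_v) \leq \min\{\mathfrak{r}_q, d\}$, I would partition $\Sigma = \{1,\ldots,N\}^{\mathbb{N}}$ at the stopping time where $\varphi^s(A_{i_1}\cdots A_{i_n}) \approx r^s$ for a fixed $s > \mathfrak{r}_q$, cover each cylinder image $\pi_v([\mathbf{i}])$ by at most a constant times $\varphi^s(A_{\mathbf{i}})/r^s$ cubes of side $r$, and invoke an $\ell^q$-inequality $(a_1+\cdots+a_m)^q \leq m^{q-1}\sum a_j^q$ to redistribute the cylinder mass $p_{\mathbf{i}}$ among those cubes; summing gives $M_r(q,\mu_v) \leq C r^{(q-1)s}$ uniformly in $v$.

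The heart of the argument is the almost-sure lower bound. First reduce to an energy-type estimate: since $M_r(q,\mu_v) = \int \mu_v(C(x))^{q-1}\,d\mu_v(x) \leq \int \mu_v(B(x,\sqrt{d}\,r))^{q-1}\,d\mu_v(x)$, Chebyshev's inequality $\mu_v(B(x,r)) \leq r^s \int |y-x|^{-s}d\mu_v(y)$ yields $M_r(q,\mu_v) \leq C r^{(q-1)s} \mathcal{I}_{q,s}(\mu_v)$ where
\[
\mathcal{I}_{q,s}(\mu_v) := \int \left(\int |y-x|^{-s}\,d\mu_v(y)\right)^{q-1} d\mu_v(x).
\]
It suffices to show that $\int_U \mathcal{I}_{q,s}(\mu_v)\,dv < \infty$ for a bounded cube $U \subseteq (\mathbb{R}^d)^N$ and every $s < \mathfrak{r}_q$; combined with Markov's inequality at dyadic scales and Borel--Cantelli, this gives $\underline{D}_q(\mu_v) \geq s$ for almost every $v$, and the conclusion follows by letting $s$ increase to $\mathfrak{r}_q$ through a countable sequence. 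Expressing $\mathcal{I}_{q,s}$ symbolically via $\pi_v$, I would partition the inner variable $\mathbf{j}$ according to its first disagreement with $\mathbf{i}$ at some position $n$ with common prefix $\mathbf{k} = i_1\cdots i_{n-1}$, and apply the subadditivity $(\sum a_i)^{q-1} \leq \sum a_i^{q-1}$ (valid for $q - 1 \in (0,1]$) to the outer $(q-1)$-th power. Integration against $d\nu(\mathbf{i}) = p_{\mathbf{k}} p_{i_n}\,d\nu(\mathbf{i}')$ then packages the prefix weight as $p_{\mathbf{k}} \cdot p_{\mathbf{k}}^{q-1} = p_{\mathbf{k}}^q$.

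The remaining inner object is $G := \int |\pi_v(\mathbf{k}\,b\,\mathbf{j}') - \pi_v(\mathbf{k}\,i_n\,\mathbf{i}')|^{-s}\,d\nu(\mathbf{j}')$, to be raised to the power $q-1$ and integrated in $v$. Since $\pi_v(\mathbf{k}\,b\,\mathbf{j}') - \pi_v(\mathbf{k}\,i_n\,\mathbf{i}') = A_{k_1}\cdots A_{k_{n-1}}(v_b - v_{i_n}) + (\text{terms independent of }v_b)$, the standard singular-value integral estimate $\int_U |A_{\mathbf{k}} w + c|^{-s}\,dw \leq C/\varphi^s(A_{\mathbf{k}})$ (valid for $s < d$, uniformly over bounded domains) yields $\int_U G\,dv \leq C/\varphi^s(A_{\mathbf{k}})$ by Fubini; then \emph{Jensen's inequality applied to the $v$-integration}---whose sign is favourable because $t \mapsto t^{q-1}$ is concave---produces
\[
\int_U G^{q-1}\,dv \leq |U|^{2-q}\left(\int_U G\,dv\right)^{q-1} \leq C'\,\varphi^s(A_{\mathbf{k}})^{1-q},
\]
which is exactly the factor demanded by the definition of $\mathfrak{r}_q$. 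Assembling the pieces gives $\int_U \mathcal{I}_{q,s}(\mu_v)\,dv \leq C \sum_{n \geq 1}\sum_{\mathbf{k} \in \{1,\ldots,N\}^{n-1}} p_{\mathbf{k}}^q\,\varphi^s(A_{\mathbf{k}})^{1-q}$, which is finite precisely when $s < \mathfrak{r}_q$. The main difficulty is the coordination of signs: Jensen must be applied to the $v$-integration (where it helps) and \emph{not} to either $\nu$-integration (where it would go the wrong way), while the cylinder bookkeeping must be arranged so that the $p_{\mathbf{k}}^{q-1}$ arising from subadditivity combines exactly with one further $p_{\mathbf{k}}$ from $d\nu(\mathbf{i})$. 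The restriction $q \leq 2$ enters essentially at these two concavity/subadditivity steps, both of which fail for $q > 2$; the $\min\{\cdot,d\}$ cap is handled by a separate argument exploiting the failure of the singular-value integral estimate for $s \geq d$, where an absolutely continuous component takes over.
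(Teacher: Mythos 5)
The paper does not prove this statement: it is quoted, with attribution, from Falconer \cite[Theorem 6.2]{Fa99}, so there is no internal proof to compare against. Measured against Falconer's original argument, your outline follows essentially the same route --- a covering/power-mean argument for the bound valid for every translation vector, and a generalised $(q,s)$-energy integrated over the translation parameters for the almost-sure bound --- and you locate the role of the hypothesis $1<q\le 2$ correctly: subadditivity of $t\mapsto t^{q-1}$ for the symbolic decomposition and concavity of the same function for the Jensen step in the $v$-integration, applied in that order and nowhere else.

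Two concrete points need repair. First, the inequality in your deterministic half points the wrong way: since $(q-1)\log r\to-\infty$ as $r\to 0$, an estimate $M_r(q,\mu_v)\le Cr^{(q-1)s}$ yields $\underline{D}_q(\mu_v)\ge s$, not $\overline{D}_q(\mu_v)\le s$. To prove $\overline{D}_q\le\mathfrak{r}_q$ you need a \emph{lower} bound $M_r(q,\mu_v)\ge c\,r^{(q-1)s}$ for each $s>\mathfrak{r}_q$; your $\ell^q$-inequality, read as $\sum_j a_j^q\ge m^{1-q}\bigl(\sum_j a_j\bigr)^q$, is exactly the right tool for that, so the error is in the stated conclusion rather than the method, but as written the first paragraph establishes the wrong half of the theorem. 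Second, in the transversality step the phrase ``terms independent of $v_b$'' is false in general: the tails $\pi_v(\mathbf{j}')$ and $\pi_v(\mathbf{i}')$ depend on every coordinate of $v$, including $v_b$, whenever the symbol $b$ recurs in them. One must instead change variables to the full affine expression $w=w(v)$ and verify that its Jacobian is bounded away from zero uniformly; this is precisely where the hypothesis $\|A_i\|<\tfrac{1}{2}$ enters (it makes the linear part a dominated perturbation of $v_b-v_{i_n}$), and your sketch never invokes that hypothesis anywhere. A minor further point: the cap at $d$ requires nothing about absolutely continuous components --- the lower bound only ever needs $s<d$, and $\overline{D}_q(\mu)\le d$ holds trivially for every compactly supported measure when $q>1$.
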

In the later article \cite{Fa10} this result was extended to a more general class of almost self-affine measures, under the weaker hypotheses $\max\|A_i\|<1$ and $q>1$ but requiring randomised translations in a similar manner to \cite{JoPoSi07}. In view of the recent work of Feng and Shmerkin \cite{FeSh14} on the continuity of the formula $\mathfrak{s}(A_1,\ldots,A_N)$ for the typical dimension of a self-affine set, it is natural to ask whether the formula $\mathfrak{r}_q(A_1,\ldots,A_N,\mathsf{p})$ for the typical dimension of a self-affine measure is also continuous with respect to changes in the matrices $A_1,\ldots,A_N$. The purpose of this article is to answer this question negatively.

In order to state our results we require an additional definition. Let us say that a pair of matrices $(A_1,A_2)$ is $(c,\varepsilon,\lambda)$-resistant if it has the following property: for all choices of $i_1,\ldots,i_n\in\{1,2\}$ such that at most $\varepsilon n$ of the integers $i_k$ are equal to $2$, we have $\|A_{i_1}\cdots A_{i_n}\|\geq c\lambda^n$. We will say that $(A_1,A_2)$ \emph{resists impurities}, or more simply is \emph{resistant}, if it is $(c,\varepsilon,\lambda)$-resistant for some $c,\varepsilon>0$ and some $\lambda>1$. We recall the following conjecture of Bochi and Fayad \cite{BoFa06}:
\begin{conjecture}[Bochi-Fayad Conjecture]\label{co:bf}
Let $\mathcal{H}$ denote the set of all $2 \times 2$ real matrices with unit determinant and unequal real eigenvalues, and let $\mathcal{E}$ denote the set of all $2 \times 2$ real matrices with unit determinant and non-real eigenvalues. Then the set of resistant pairs $(A_1,A_2) \in \mathcal{H}\times \mathcal{E}$ has full Lebesgue measure.
\end{conjecture}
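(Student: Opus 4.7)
Since this is a stated open conjecture, any plan is necessarily speculative; I outline what seems the most natural direction of attack.

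The first step would be to fix $A_1\in\mathcal{H}$ with eigenvalues $\mu>\mu^{-1}$ and unstable/stable directions $u,s\in\mathbb{RP}^1$, and to decompose any admissible word of length $n$ as an alternating product
\[A_1^{k_0}A_2A_1^{k_1}A_2\cdots A_2A_1^{k_m}\]
with $m\leq \varepsilon n$ and $\sum_{j=0}^m k_j=n-m$. Each block $A_1^{k_j}$ realises strong north--south dynamics on $\mathbb{RP}^1$: a unit vector outside a small neighbourhood of $s$ has its norm multiplied by roughly $\mu^{k_j}$ and is pushed to within $\mu^{-2k_j}$ of $u$. Consequently $\|A_{i_1}\cdots A_{i_n}\|$ is governed by how close the running projective orbit comes to $s$ at the start of each new $A_1$-block, since only near-alignment with $s$ can spoil the multiplicative growth produced by the long $A_1$-blocks.

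The second step would be a Fubini-style argument: fix $A_1$ and show that the set of \emph{resonant} $A_2\in\mathcal{E}$ has measure zero. The locus where $A_2u=s$ is codimension one in $\mathcal{E}$, and its $\mu^{-k}$-neighbourhood has Lebesgue measure $O(\mu^{-k})$. Analogous codimension-one algebraic conditions describe the higher resonance events $(A_1^{\ell_r}A_2\cdots A_2A_1^{\ell_1}A_2)(u)=s$; after enumerating these over combinatorial patterns $(\ell_1,\ldots,\ell_r)$ and summing via Borel--Cantelli, one would hope to obtain a full-measure set of $A_2$ along which near-resonances of depth $k$ occur with density $O(\mu^{-k})$ in any admissible word. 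Combining this with the block decomposition, the logarithm of the product norm would be bounded below by $n\log\mu$ minus a resonance error, which one would try to show is $o(n)$ uniformly in the admissible pattern.

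The principal obstacle---and, I believe, the reason the conjecture remains open---is that near-resonances can \emph{cascade}: a single near-alignment followed by a short $A_1$-block and another $A_2$-insertion may leave the orbit again within comparable distance of $s$, and the number of admissible cascade patterns grows exponentially with $n$. The naive Borel--Cantelli estimate is not summable across all such cascade shapes, and a genuine multiscale or renormalisation analysis intertwining the Diophantine properties of $(A_1,A_2)$ with the combinatorics of the insertion pattern seems unavoidable. One would finally apply Fubini to the resonance-avoidance set to conclude that a full-measure subset of $\mathcal{H}\times\mathcal{E}$ consists of resistant pairs; controlling a possible exceptional set of $A_1$'s (for instance those whose eigendirections are Diophantine-anomalous relative to a positive-measure family of $A_2$) would likely require a separate treatment.
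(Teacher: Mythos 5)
This statement is an open conjecture: the paper does not prove it, but only assumes it as a hypothesis for Theorem \ref{th:fad}(ii), and cites partial results in \cite{AlSeUn11,AvRo09,FaKr08} together with Bonatti's explicit examples described in \cite{BoMo15}. Your proposal is therefore not comparable to any argument in the paper, and more importantly it is not a proof: it is a programme whose central step fails for exactly the reason you yourself identify. The block decomposition and the north--south dynamics of $A_1^{k_j}$ on $\mathbb{RP}^1$ are the standard and correct starting point, but the Borel--Cantelli step is where the argument collapses. The number of cascade patterns $(\ell_1,\ldots,\ell_r)$ of a given total length grows exponentially, while the measure of the near-resonance locus attached to each pattern decays only like $\mu^{-k}$ in the depth $k$; the resulting sum over all admissible patterns is not summable, so no full-measure set of $A_2$ is produced. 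Saying that ``a genuine multiscale or renormalisation analysis seems unavoidable'' names the missing idea without supplying it, and that missing idea is precisely the content of the conjecture.

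Two further points deserve attention if you pursue this. First, the quantifier structure of resistance is stronger than what your sketch targets: you need a single triple $(c,\varepsilon,\lambda)$ with $\lambda>1$ working uniformly over \emph{all} words with at most $\varepsilon n$ occurrences of the second letter, not merely a lower bound $n\log\mu - o(n)$ along each word separately, and certainly not a bound that degrades with the combinatorial complexity of the insertion pattern. Second, your Fubini step fixes $A_1$ and varies $A_2$, but the exceptional set of $A_1$ you defer to ``a separate treatment'' cannot be waved away: the known examples of non-resistant pairs (such as those underlying the discontinuities in \cite{BoMo15}) show that resonance is a genuinely joint condition on the pair, so the sliced measure-zero claim for fixed $A_1$ is itself part of what must be proved, not a reduction of it.
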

Some partial results in the direction of Conjecture \ref{co:bf} may be found in \cite{AlSeUn11,AvRo09,FaKr08}. C. Bonatti has constructed explicit examples of resistant pairs in which $A_2$ is a rational rotation, and these examples are described in \cite{BoMo15}.

When investigating the discontinuities of $\mathfrak{r}_q$ we will focus on the situation in which $(A_1,\ldots,A_N)$ is a pair of real matrices of dimension two. In this case the probability vector $\mathsf{p}=(p_1,p_2)$ has the form $(p,1-p)$ for some real number $p\in (0,1)$, and in view of this we shall simply write $\mathfrak{r}_q(A_1,A_2,p)$ in place of the value $\mathfrak{r}_q(A_1,A_2,(p,1-p))$ defined previously. We prove:
\begin{theorem}\label{th:fad}
The function $\mathfrak{r}$ admits the following discontinuities:
\begin{enumerate}[(i)]
\item
Let $q>1$, $p \in (0,1)$ and $0<\delta<\lambda<\frac{1}{2}$, and suppose that $\delta$ is small enough that
\[\frac{\log\min\{p^q,(1-p)^q\}}{\log \sqrt{\lambda\delta}} < \frac{\log (p^q+(1-p)^q)}{\log \lambda}.\]
Then the function $(A_1,A_2)\mapsto \mathfrak{r}_q(A_1,A_2,p)$ is discontinuous at the pair
\[A_1:=\left(\begin{array}{cc}\lambda &0\\0&\delta \end{array}\right),\qquad A_2:=\left(\begin{array}{cc}\lambda &0\\0&\lambda\end{array}\right).\]
\item
If  the Bochi-Fayad Conjecture is true then there exists a set $X \subseteq M_2(\mathbb{R})^2$ with positive Lebesgue measure  with the following properties: $\|A_1\|,\|A_2\|<\frac{1}{2}$ for all $(A_1,A_2) \in X$, and there exists $Q>1$ such that for all $p \in [\frac{1}{2},1)$ and $q>Q$, the function $(A_1,A_2)\mapsto \mathfrak{r}_q(A_1,A_2,p)$ is discontinuous at every point of $X$.
\end{enumerate}
\end{theorem}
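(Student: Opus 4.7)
The plan relies on the negative exponent $1-q$ in Falconer's formula: producing a single matrix product whose norm decays faster than those at a reference pair amplifies a summand of the series defining $\mathfrak{r}_q$ and forces the series to diverge for strictly smaller~$s$. The proof identifies pairs at which norms are uniformly as large as possible and constructs nearby perturbations admitting collapsing products.

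For part (i), first compute $\mathfrak{r}_q$ at the unperturbed pair: since $A_2=\lambda I$ is scalar, every product $A_{i_1}\cdots A_{i_n}$ equals the diagonal matrix $\mathrm{diag}(\lambda^n,\lambda^{n-k}\delta^k)$ with $k$ the number of indices equal to $1$, and the defining series factorises as a double geometric series. The hypothesis $\lambda<\tfrac12$ forces the critical exponent into $(0,1)$, where $\varphi^s=\sigma_1^s=\lambda^{ns}$, and one reads off
\[\mathfrak{r}_q(A_1,A_2,p)=\frac{\log(p^q+(1-p)^q)}{(q-1)\log\lambda}.\]
For the perturbations, set $A_2^{(b)}:=\lambda R_{\pi/(2b)}$ for positive integers $b$, so that $A_2^{(b)}\to\lambda I$ but $(A_2^{(b)})^b=\lambda^b R_{\pi/2}$. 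The identity $R_{\pi/2}\,\mathrm{diag}(\lambda^a,\delta^a)\,R_{\pi/2}^{-1}=\mathrm{diag}(\delta^a,\lambda^a)$ yields the key collapse
\[\bigl(A_1^{a}(A_2^{(b)})^{b}\bigr)^{2}=\pm\lambda^{2b}(\lambda\delta)^{a}I,\]
a word of length $2(a+b)$ whose two singular values are both equal to $\lambda^{2b+a}\delta^{a}$ and whose weight is $p^{2aq}(1-p)^{2bq}$. Using this one word alone as a lower bound on the series defining $\mathfrak{r}_q(A_1,A_2^{(b)},p)$ and summing over $a$ with $b$ held fixed gives the geometric series $\sum_a[(\lambda\delta)^{s(1-q)}p^{2q}]^a$ up to a positive constant, which diverges as soon as $s>q\log p/((q-1)\log\sqrt{\lambda\delta})$. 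Hence $\mathfrak{r}_q(A_1,A_2^{(b)},p)$ does not exceed this quantity for any $b$; when $p\le\tfrac12$ it coincides with the $s_1^*:=\log\min\{p^q,(1-p)^q\}/((q-1)\log\sqrt{\lambda\delta})$ appearing in the hypothesis, and when $p>\tfrac12$ it is strictly smaller still. In either case the hypothesis on $\delta$ asserts that this bound lies strictly below $\mathfrak{r}_q(A_1,A_2,p)$, and letting $b\to\infty$ produces the claimed discontinuity.

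For part (ii), the plan is to combine the collapse mechanism of part (i) with the Bochi-Fayad conjecture. Granted the conjecture, resistant pairs have full Lebesgue measure in $\mathcal{H}\times\mathcal{E}$; a standard measure-theoretic extraction yields a positive-measure subset on which a common resistance triple $(c,\varepsilon,\mu)$ is admissible, and rescaling by a small factor $\rho>0$ produces a positive-measure set $X\subseteq M_2(\mathbb{R})^2$ of uniformly resistant pairs with $\|A_i\|<\tfrac12$. For $(A_1,A_2)\in X$, the resistance estimate $\|A_{i_1}\cdots A_{i_n}\|\geq c(\rho\mu)^n$ on mostly-$A_1$ words, combined with the small total weight $O((1-p)^{(1-\varepsilon)qn})$ of mostly-$A_2$ words (available since $p\in[\tfrac12,1)$), furnishes a lower bound $R_q(p)$ on $\mathfrak{r}_q(A_1,A_2,p)$ depending only on $p,q$ and on $(c,\varepsilon,\rho\mu)$. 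To supply nearby pairs with strictly smaller value, perturb $A_2$ inside the rescaled elliptic family so that its rotation factor becomes a rational multiple of $\pi$ aligned with the hyperbolic eigenbasis of $A_1$; the part (i) collapse then produces mostly-$A_1$ words of arbitrary length and norm comparable to $(|\det A_1|^{1/2})^n$, and mimicking the computation of (i) bounds the perturbed $\mathfrak{r}_q$ above by a quantity of order $\log\min\{p^q,(1-p)^q\}/((q-1)\log|\det A_1|^{1/2})$. For $q$ larger than a threshold $Q$ chosen so that the norm bonus overcomes the weight penalty, this upper bound falls strictly below $R_q(p)$, yielding discontinuity at every point of $X$. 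The principal difficulty lies precisely in part (ii): one needs uniform quantitative control of $\mathfrak{r}_q$ on the positive-measure set $X$ (hence the extraction of common resistance parameters) together with a perturbative construction of the aligned rational-rotation pair that remains close to an arbitrary $(A_1,A_2)\in X$, for which the three-dimensional structure of $\mathcal{E}$ and the density of suitable rational rotations provide the necessary room.
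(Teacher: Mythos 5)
Your part (i) is correct and is essentially the paper's own argument: the same perturbation $\lambda R_{\pi/2k}$ of the scalar matrix, the same collapsing word (your $(A_1^a(A_2^{(b)})^b)^2$ versus the paper's $B_1^nB_2^kB_1^n$), and the same comparison with the hypothesis on $\delta$. You work directly with divergence of the defining series where the paper routes everything through the subadditive pressure $\mathbf{R}_q$ and Corollary \ref{co:cks}, and your use of the weight $p^{2aq}$ of the $A_1$-heavy word is in fact slightly sharper than the paper's crude $\min\{p,1-p\}^{qn}$; these are cosmetic differences.

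Part (ii), however, has a genuine gap at precisely the point the paper flags as delicate: the quantitative comparison between the lower bound for $\mathfrak{r}_q$ on $X$ and the upper bound at the perturbations. First, your lower bound $R_q(p)$ is not substantiated. The total weight of the $A_2$-heavy words is not $O((1-p)^{(1-\varepsilon)qn})$: a word with just over $\varepsilon n$ letters equal to $2$ has weight about $p^{(1-\varepsilon)qn}(1-p)^{\varepsilon qn}$, and for $p$ near $\tfrac12$ and small resistance parameter $\varepsilon$ the total weight of these words is comparable to $(p^q+(1-p)^q)^n$, so there is no gain from the weights. The only available control on $A_2$-heavy words is the norm bound $\|M\|\geq\sqrt{|\det M|}$, and for this to push $\underline{\varrho}(A_1,A_2)$ \emph{strictly above} $\sqrt{\det A_1}$ (the value to which the perturbation collapses) one needs $\det A_2>\det A_1$ --- this is exactly why Lemma \ref{le:bf} works with pairs $(\alpha H,\beta R)$, $\beta>\alpha$. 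If you rescale both matrices by the same factor $\rho$, the determinant bound gives exactly $\rho^n=(\sqrt{\det A_1})^n$ for every word and no gap survives; the resulting lower bound on $X$ is then $\log(p^q+(1-p)^q)/((q-1)\log\sqrt{\det A_1})$ while the upper bound at the perturbations is $q\log p/((q-1)\log\sqrt{\det A_1})$, and since $p^q<p^q+(1-p)^q$ the latter is the \emph{larger} of the two for every $q$: no discontinuity follows, for any choice of $Q$. What is needed, and what your sketch never produces, is a uniform multiplicative gap $e^{\varepsilon}$ between $\underline{\varrho}(A_1,A_2)\geq e^{-\kappa}$ on $X$ and $\sqrt{\det A_1}\leq e^{-\kappa-\varepsilon}$, extracted on a positive-measure set; the threshold is then forced to be $Q=1+\kappa/\varepsilon$ via the inequality $\log p^q/\log(p^q+(1-p)^q)<1+\varepsilon/\kappa$.

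A second, related defect: your upper bound at the perturbed pairs uses $\log\min\{p^q,(1-p)^q\}=q\log(1-p)$ for $p>\tfrac12$, which tends to $-\infty$ as $p\to1$, so the resulting bound $q\log(1-p)/((q-1)\log\sqrt{\det A_1})$ tends to $+\infty$ and is useless near $p=1$; no single $Q$ could then serve for all $p\in[\tfrac12,1)$. The repair --- which you actually carried out correctly in part (i) --- is to observe that the collapsing word $B_1^nB_2^kB_1^n$ is $B_1$-heavy, so its weight is governed by $p=\max\{p,1-p\}$ and the upper bound becomes $q\log p/((1-q)(\varepsilon+\kappa))$, uniformly bounded on $[\tfrac12,1)$. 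This is the ``slightly finer estimate'' the paper alludes to, and together with the gap described above it is what closes the argument.
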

\emph{Remark.}
The reader will see from the proofs below that Theorem \ref{th:fad}(i) in fact has the following more precise statement: if $R_\theta$ denotes the matrix corresponding to rotation about the origin through angle $\theta$, then
\[\liminf_{\theta \to 0} \mathfrak{r}_q(A_1,\lambda R_\theta,p)<\mathfrak{r}_q(A_1,A_2,p).\]
Clearly the case of $p<\frac{1}{2}$ may also be considered in (ii) by interchanging the r\^oles of the matrices $A_1$ and $A_2$.

It would be of interest to remove the restriction on $q$ in (ii) so as to bring that statement into line with (i). Unfortunately the Bochi-Fayad Conjecture does not seem to be a sufficiently strong statement to allow us to deduce that $(A_1,A_2) \mapsto \mathfrak{r}_q(A_1,A_2,p)$ has a positive-measure set of discontinuities for every $p \in (0,1)$ and $q>1$. We nonetheless conjecture that this map has a positive-measure set of discontinuities for all such $p$ and $q$, and hope that whatever methods may be employed to prove the Bochi-Fayad Conjecture will also suffice to establish the discontinuity of $\mathfrak{r}_q(\cdot,\cdot,p)$ on a set of positive measure for all $q>1$ and $p \in (0,1)$.


\section{Proof of Theorem \ref{th:fad}}
For $p \in (0,1)$, $q>1$, $s>0$ and invertible matrices $A_1,A_2 \in M_2(\mathbb{R})$, let us define $p_1:=p$ and $p_2:=(1-p)$, and write
\[\mathbf{R}_q(A_1,A_2,p,s):=\lim_{n \to \infty} \frac{1}{n}\log\left(\sum_{i_1,\ldots,i_n=1}^N \varphi^s\left(A_{i_1}\cdots A_{i_n}\right)^{1-q}p_{i_1}^q \cdots p_{i_n}^q\right).\]
We note:
\begin{lemma}
For fixed invertible matrices $A_1,A_2$ such that $\|A_1\|,\|A_2\|<1$, fixed $q>1$ and fixed $p \in(0,1)$ the function $\mathbf{R}_q(A_1,A_2,p,\cdot) \colon (0,+\infty) \to \mathbb{R}$ is well-defined and strictly increasing.
\end{lemma}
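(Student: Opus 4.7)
The plan is to verify that the sequence $a_n(s):=\log\sum_{i_1,\ldots,i_n=1}^{N} \varphi^s(A_{i_1}\cdots A_{i_n})^{1-q} p_{i_1}^q\cdots p_{i_n}^q$ is superadditive in $n$ and bounded above linearly, then invoke Fekete's lemma, and finally establish strict monotonicity in $s$ by a slope estimate on $\log\varphi^s$. For superadditivity, the standard submultiplicativity $\varphi^s(AB)\le\varphi^s(A)\varphi^s(B)$ (a consequence of Horn's inequalities on the singular values of a product) reverses when raised to the exponent $1-q<0$; splitting each word of length $n+m$ into its length-$n$ prefix and length-$m$ suffix and factoring the $p$-weights accordingly yields $a_{n+m}(s)\ge a_n(s)+a_m(s)$. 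For the linear upper bound, the elementary estimate $\varphi^s(A)\ge\sigma_d(A)^s$ together with $\sigma_d(AB)\ge\sigma_d(A)\sigma_d(B)$ (which follows from $\sigma_d(M)=\|M^{-1}\|^{-1}$ and submultiplicativity of the operator norm) gives, setting $\sigma^{\min}:=\min_i\sigma_d(A_i)>0$, the bound $a_n(s)\le n\bigl[s(1-q)\log\sigma^{\min}+\log\bigl(\sum_i p_i^q\bigr)\bigr]$. Fekete's lemma then ensures $a_n(s)/n$ converges to its supremum, which is finite.

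For strict monotonicity, set $\beta:=\max_i\|A_i\|<1$, so $\|A_{i_1}\cdots A_{i_n}\|\le\beta^n$. Viewed as a function of $s$, $\log\varphi^s(A)$ is piecewise linear with slope $\log\sigma_{k+1}(A)$ on $[k,k+1]$ for $k\le d-1$ and slope $\tfrac{1}{d}\log|\det A|$ on $[d,\infty)$, and in every case this slope is bounded above by $\log\sigma_1(A)=\log\|A\|$. Integrating yields the key inequality $\log\varphi^{s_2}(A)-\log\varphi^{s_1}(A)\le (s_2-s_1)\log\|A\|$ for all $0<s_1<s_2$. Applied to each product $A_{i_1}\cdots A_{i_n}$, and recalling that $1-q<0$, this gives
\[
\varphi^{s_2}(A_{i_1}\cdots A_{i_n})^{1-q}\ge\beta^{n(1-q)(s_2-s_1)}\cdot\varphi^{s_1}(A_{i_1}\cdots A_{i_n})^{1-q}
\]
uniformly in the indices. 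Summing termwise gives $a_n(s_2)-a_n(s_1)\ge n(q-1)(s_2-s_1)|\log\beta|$ for every $n$; dividing by $n$ and passing to the limit yields $\mathbf{R}_q(A_1,A_2,p,s_2)-\mathbf{R}_q(A_1,A_2,p,s_1)\ge(q-1)(s_2-s_1)|\log\beta|>0$.

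I do not anticipate any substantial obstacle: the only non-trivial external input is the submultiplicativity of $\varphi^s$, which is standard in the affinity-dimension literature, after which the argument reduces to a routine application of Fekete's lemma combined with the piecewise-linear structure of $s\mapsto\log\varphi^s(A)$.
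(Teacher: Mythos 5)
Your proposal is correct and follows essentially the same route as the paper: superadditivity of $a_n(s)$ from submultiplicativity of $\varphi^s$ plus Fekete, finiteness via a lower bound on the smallest singular value of the products, and strict monotonicity from the inequality $\varphi^{s+t}(A)\le\varphi^s(A)\|A\|^t$ (which you derive from the piecewise-linear slopes of $s\mapsto\log\varphi^s(A)$, and the paper simply quotes) combined with $\|A_{i_1}\cdots A_{i_n}\|\le\beta^n$ and the sign reversal from the exponent $1-q<0$.
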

\begin{proof}
It is well-known that $\varphi^s(AB) \leq \varphi^s(A)\varphi^s(B)$ for all $s>0$ and $A,B \in M_d(\mathbb{R})$, see for example \cite[Lemma 2.1]{Fa88}; since the proof is brief we include it. For $s \geq d$ the result is trivial, and for $k \leq s<k+1$, $k=0,\ldots,d-1$ we have
\begin{align*}\varphi^s(AB) &= \left(\sigma_1(AB)\cdots \sigma_{k+1}(AB)\right)^{s-k}  \left(\sigma_1(AB)\cdots \sigma_{k}(AB)\right)^{k+1-s}\\
&=\left\|\wedge^{k+1}(AB)\right\|^{s-k}\left\|\wedge^k(AB)\right\|^{k+1-s}\\
&\leq\left\|\wedge^{k+1}A\right\|^{s-k}\left\|\wedge^{k+1}B\right\|^{s-k}\left\|\wedge^kA\right\|^{k+1-s}\left\|\wedge^kB\right\|^{k+1-s} =\varphi^s(A)\varphi^s(B)\end{align*}
as claimed. It follows that each sequence $(a_n)$ defined by
\[a_n:=\log\left(\sum_{i_1,\ldots,i_n=1}^N \varphi^s\left(A_{i_1}\cdots A_{i_n}\right)^{1-q}p_{i_1}^q \cdots p_{i_n}^q\right)\]
satisfies $a_{n+m }\geq a_n+a_m$ for all $n,m \geq 1$, and this is well known to imply the convergence of the sequence $(1/n)a_n$ to a limit in $(-\infty,+\infty]$. Observe that $\varphi^s(A) \geq \sigma_2(A)^s$ for all $A \in M_2(\mathbb{R})$. Since $A_1,A_2$ are invertible we have $\sigma_2(A_1),\sigma_2(A_2) \geq \varepsilon$ for some $\varepsilon>0$, and thus
\begin{align*}\sum_{i_1,\ldots,i_n=1}^N \varphi^s\left(A_{i_1}\cdots A_{i_n}\right)^{1-q}p_{i_1}^q \cdots p_{i_n}^q &\leq \sum_{i_1,\ldots,i_n=1}^N \sigma_2\left(A_{i_1}\cdots A_{i_n}\right)^{s(1-q)}p_{i_1}^q \cdots p_{i_n}^q\\
& \leq \sum_{i_1,\ldots,i_n=1}^N \varepsilon^{s(1-q)}p_{i_1}^q \cdots p_{i_n}^q\\
&=\varepsilon^{ns(1-q)}\left(p^q+(1-p)^q\right)^n \end{align*}
(where we have used the fact that $1-q$ is negative) so that the limit is finite. 

Let us show that $\mathbf{R}_q(A_1,A_2,p,s)$ is strictly increasing in $s$. We  note that $\varphi^{s+t}(A)\leq \varphi^s(A)\|A\|^t$ for all $s,t>0$ and for every matrix $A \in M_2(\mathbb{R})$. Taking $\theta:=\max\{\|A_1\|,\|A_2\|\} \in (0,1)$ it follows that for all $n \geq 1$
\begin{eqnarray*}\lefteqn{\sum_{i_1,\ldots,i_n=1}^N \varphi^{s+t}\left(A_{i_1}\cdots A_{i_n}\right)^{1-q}p_{i_1}^q \cdots p_{i_n}^q}& & \\
& \geq& \theta^{nt(1-q)} \sum_{i_1,\ldots,i_n=1}^N \varphi^{s}\left(A_{i_1}\cdots A_{i_n}\right)^{1-q}p_{i_1}^q \cdots p_{i_n}^q\end{eqnarray*}
and therefore
\[\mathbf{R}_q(A_1,A_2,p,s+t) \geq (1-q)t\log \theta + \mathbf{R}_q(A_1,A_2,p,s)>\mathbf{R}_q(A_1,A_2,p,s)\]
as required.
\end{proof}
Our interest in the previous lemma is due to the following consequence:
\begin{corollary}\label{co:cks}
For all invertible matrices $A_1,A_2$ such that $\|A_1\|,\|A_2\|<1$, all $p \in (0,1)$ and all $q>1$, we have
\begin{align}\label{eq:eek}\mathfrak{r}_q(A_1,A_2,p)&=\sup\left\{s>0 \colon \mathbf{R}_q(A_1,A_2,p,s)<0\right\}\\\nonumber
&=\inf\left\{s>0 \colon \mathbf{R}_q(A_1,A_2,p,s)>0\right\}.\end{align}
\end{corollary}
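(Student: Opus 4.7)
My approach is to recognise that $\mathbf{R}_q(A_1,A_2,p,\cdot)$ is exactly the exponential growth rate of the $n$th term of the series whose critical exponent defines $\mathfrak{r}_q$, so that the corollary reduces to a root-test argument combined with the strict monotonicity in $s$ established in the preceding lemma. Writing
\[a_n(s):=\sum_{i_1,\ldots,i_n=1}^N \varphi^s(A_{i_1}\cdots A_{i_n})^{1-q}p_{i_1}^q\cdots p_{i_n}^q,\]
the previous lemma yields $\mathbf{R}_q(A_1,A_2,p,s)=\lim_{n\to\infty}(1/n)\log a_n(s)\in\mathbb{R}$.

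For the first direction, suppose $s_0>0$ satisfies $\mathbf{R}_q(A_1,A_2,p,s_0)<0$. Fixing any $\varepsilon>0$ with $\mathbf{R}_q(s_0)+\varepsilon<0$, the definition of the limit gives $a_n(s_0)\le e^{n(\mathbf{R}_q(s_0)+\varepsilon)}$ for all sufficiently large $n$, so that $\sum_{n=1}^\infty a_n(s_0)<\infty$; hence $s_0$ lies in the set whose supremum is $\mathfrak{r}_q$, and so $s_0\le\mathfrak{r}_q(A_1,A_2,p)$. Taking the supremum over all such $s_0$ gives
\[\sup\{s>0:\mathbf{R}_q(A_1,A_2,p,s)<0\}\le\mathfrak{r}_q(A_1,A_2,p).\]
Conversely, if $\mathbf{R}_q(A_1,A_2,p,s_0)>0$, then by strict monotonicity $\mathbf{R}_q(A_1,A_2,p,s)>0$ for every $s>s_0$, and the analogous lower bound $a_n(s)\ge e^{n\mathbf{R}_q(s)/2}$ for large $n$ forces the defining series to diverge at every such $s$; hence $\mathfrak{r}_q(A_1,A_2,p)\le s_0$, and taking the infimum yields
\[\mathfrak{r}_q(A_1,A_2,p)\le\inf\{s>0:\mathbf{R}_q(A_1,A_2,p,s)>0\}.\]

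Combining these two inequalities produces the desired chain, and since $\mathbf{R}_q(A_1,A_2,p,\cdot)$ is a strictly increasing real-valued function on $(0,+\infty)$ it has at most one zero, so the two outer quantities in the chain coincide and the corollary follows. I do not foresee a genuine obstacle here: the essential content has already been packaged into the preceding lemma (superadditivity plus finiteness plus strict monotonicity), and what remains is the elementary extraction of a critical exponent from a superadditive sequence.
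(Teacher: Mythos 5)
Your root-test argument (terms of the defining series grow like $e^{n\mathbf{R}_q(s)}$, so the series converges when $\mathbf{R}_q(s)<0$ and diverges when $\mathbf{R}_q(s)>0$, whence the critical exponent is squeezed between the supremum and the infimum, which coincide by strict monotonicity) is exactly the intended deduction; the paper states the corollary without proof as an immediate consequence of the lemma. The only detail left implicit is that both sets are nonempty, which follows from the bounds $\mathbf{R}_q(s)\leq s(1-q)\log\varepsilon+\log(p^q+(1-p)^q)$ and $\mathbf{R}_q(s)\geq s(1-q)\log\theta+\log(p^q+(1-p)^q)$ already established in the lemma's proof.
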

Before commencing the proof of Theorem \ref{th:fad}, let us briefly describe its strategy. The proofs of continuity of the affinity dimension $\mathfrak{s}$ given in \cite{FeSh14,Mo16} operate by defining the singular value pressure function
\[\mathbf{S}(A_1,\ldots,A_N,s):=\lim_{n \to \infty} \frac{1}{n}\log \sum_{i_1,\ldots,i_n=1}^N \varphi^s\left(A_{i_1}\cdots A_{i_n}\right)\]
and observing that for fixed invertible $A_1,\ldots,A_N$ with $\max \|A_i\|<1$ the function $s \mapsto \mathbf{S}(A_1,\ldots,A_N,s)$ is strictly decreasing, so that
\begin{align}\label{eq:quack}\mathfrak{s}(A_1,\ldots,A_N) &=\sup\{s>0 \colon \mathbf{S}(A_1,\ldots,A_N,s)>0\}\\\nonumber&=\inf\{s>0 \colon \mathbf{S}(A_1,\ldots,A_N,s)<0\}.\end{align}
The proofs then proceed by showing that for each fixed $s>0$ (or in the case of \cite{Mo16}, for a dense set of $s>0$) the function $(A_1,\ldots,A_N) \mapsto \mathbf{S}(A_1,\ldots,A_N,s)$ is continuous, and then deduce the continuity of $\mathfrak{s}$ via the formula \eqref{eq:quack}. The argument which we employ in proving Theorem \ref{th:fad} essentially converse to this: we demonstrate the existence of discontinuities in the function $(A_1,A_2) \mapsto \mathbf{R}_q(A_1,A_2,p,s)$ and show that they induce discontinuities in $\mathfrak{r}_q$ via the equation \eqref{eq:eek}. 

The origin of these discontinuities can be described informally as follows. Following \cite{Gu95}, let us define the \emph{lower spectral radius} of a pair of matrices $A_1,A_2$ to be the quantity
\begin{align*}\underline{\varrho}(A_1,A_2)&:=\lim_{n \to \infty} \min_{1 \leq i_1,\ldots,i_n \leq 2} \left\|A_{i_1}\cdots A_{i_n}\right\|^{\frac{1}{n}}\\
&=\inf_{n \geq 1} \min_{1 \leq i_1,\ldots,i_n \leq 2} \left\|A_{i_1}\cdots A_{i_n}\right\|^{\frac{1}{n}}.\end{align*}
The lower spectral radius is known to depend discontinuously on the matrix entries in general \cite[p.20]{Ju09}, and this phenomenon was investigated in depth by the author and J. Bochi in \cite{BoMo15}. This relates to $\mathbf{R}_q(A_1,A_2,p,s)$ as follows: if $0<s\leq 1$ then we may estimate
\begin{eqnarray*}\lefteqn{\mathbf{R}_q(A_1,A_2,p,s)}& & \\
& \leq& \limsup_{n \to \infty} \frac{1}{n}\log \left(\sum_{i_1,\ldots,i_n=1}^2 p_{i_1}^q\cdots p_{i_n}^q\max_{1 \leq j_1,\ldots,j_n \leq 2} \left\|A_{j_1}\cdots A_{j_n}\right\|^{s(1-q)}\right)\\
&=&\log(p^q+(1-p)^q) + s(1-q)\log\underline{\varrho}(A_1,A_2)\end{eqnarray*}
-- where the negativity of the exponent $s(1-q)$ has the critical effect of converting the maximum over all matrix products into a minimum --  and similarly on the other hand
\begin{eqnarray*}\lefteqn{\mathbf{R}_q(A_1,A_2,p,s)}  & &\\
&\geq& \liminf_{n \to \infty} \frac{1}{n}\log \left(\min_{1 \leq i_1,\ldots,i_n \leq 2} p_{i_1}^q\cdots p_{i_n}^q \cdot \max_{1 \leq j_1,\ldots,j_n \leq 2} \left\|A_{j_1}\cdots A_{j_n}\right\|^{s(1-q)}\right)\\
&=&q\log \min\{p,1-p\}+ s(1-q)\log\underline{\varrho}(A_1,A_2).\end{eqnarray*}
These estimates, despite their crudity, imply that if the discontinuity of the lower spectral radius $\underline{\varrho}$ at a particular pair of matrices is strong enough then it induces a discontinuity in the function $\mathbf{R}_q$, which can be exploited to deduce a discontinuity in the function $\mathfrak{r}_q$. Indeed, the examples of discontinuity of $\mathfrak{r}_q$ in Theorem \ref{th:fad}(i) and (ii) correspond directly with known examples of the discontinuity of the lower spectral radius, specifically Example 1.1 and Proposition 7.6 in \cite{BoMo15}. In the context of Theorem \ref{th:fad}(i) we can obtain sufficient control on the size of the discontinuity in $\underline{\varrho}$ without any assumptions on $p$ and $q$. In the context of Theorem \ref{th:fad}(ii) our much weaker control on the discontinuities of $\underline{\varrho}$ means that the above estimate is only useful if $q$ is large and $p$ to be close to $\frac{1}{2}$, which has the effect of bringing the quantities $\log (p^q+(1-p)^q)$ and $q\log (\min\{p,(1-p)\})$ closer together. In order to deal with more general $p$ the proof of Theorem \ref{th:fad}(ii) in fact applies a slightly finer estimate than that indicated above: for this we require a slightly strengthened statement of \cite[Proposition 7.6]{BoMo15}, which shows not only \emph{that} the lower spectral radius is discontinuous in certain places but also specifies \emph{how} it is discontinuous. We nonetheless emphasise that the conceptual origin of the discontinuity of $\mathfrak{r}_q$ in this paper is that it is a consequence of the discontinuity of the lower spectral radius.

\begin{proof}[Proof of Theorem \ref{th:fad}(i)]
Let $q>1$ and $0<\lambda<\frac{1}{2}$, and let $A_1,A_2,\delta$ be as in Theorem \ref{th:fad}(i). Throughout the proof we shall find it useful to write $p_1:=p$, $p_2:=(1-p)$ in order to simplify certain frequently-arising expressions.
We observe that by straightforward differentiation and minimisation with respect to $p$ one has $p^q+(1-p)^q \geq 2^{1-q}$ for every $q>1$ and $p \in (0,1)$. In particular, noting the hypothesis of Theorem \ref{th:fad}(i) and the negativity of $\log \lambda$,
\begin{equation}\label{eq:incidental}0 < \frac{q\log\min\{p,1-p\}}{(q-1)\log \sqrt{\lambda\delta}}<\frac{\log(p^q+(1-p)^q)}{(q-1)\log \lambda} \leq \frac{\log\frac{1}{2}}{\log \lambda}< 1.\end{equation}
Clearly $\|A_{i_1}\cdots A_{i_n}\|=\lambda^n$ for every $i_1,\ldots,i_n \in \{1,2\}$ and $n \geq 1$, so for every $s \in (0,1]$
\begin{align*}\mathbf{R}_q(A_1,A_2,p,s)&=\lim_{n \to \infty} \frac{1}{n}\log \left(\sum_{i_1,\ldots,i_n=1}^2 \lambda^{sn(1-q)} p_{i_1}^q \cdots p_{i_n}^q\right)\\
&=s(1-q)\log\lambda + \log\left(p^q+(1-p)^q\right).\end{align*}
In particular if $s<\log(p^q+(1-p)^q)/(q-1)\log \lambda \in (0,1]$ then $\mathbf{R}_q(B_1,B_2,p,s)<0$, so we have
\[\mathfrak{r}_q(A_1,A_2,p) \geq \frac{\log(p^q+(1-p)^q)}{(q-1)\log \lambda}\]
using Corollary \ref{co:cks}. Now fix an integer $k \geq 1$ and define $B_1:=A_1$ and 
\[B_{2}:=\lambda\left(\begin{array}{cc}\cos\frac{\pi}{2k} &-\sin\frac{\pi}{2k}\\\sin\frac{\pi}{2k} &\cos\frac{\pi}{2k}\end{array}\right)\]
so that
\[B_{2}^k=\left(\begin{array}{cc}0 &-\lambda^k\\\lambda^k &0\end{array}\right).\]
(We observe that by taking $k$ sufficiently large, $(B_1,B_2)$ may be taken as close to $(A_1,A_2)$ as desired.) Since we have
\[B_1^nB_{2}^kB_1^n = \left(\begin{array}{cc}\lambda^n &0\\0 &\delta^n\end{array}\right)\left(\begin{array}{cc}0 &-\lambda^k\\\lambda^k &0\end{array}\right)\left(\begin{array}{cc}\lambda^n &0\\0 &\delta^n\end{array}\right)= \left(\begin{array}{cc}\lambda^{n+k}\delta^n &0\\0 &\lambda^{n+k}\delta^n\end{array}\right)\]
for all $n \geq 1$ it follows that
\[\min_{1 \leq i_1,\ldots,i_{2n+k}\leq 2} \|B_{i_1}\cdots B_{i_n}\| \leq \lambda^{n+k}\delta^n\]
for every $n \geq 1$, and therefore
\[\lim_{n \to \infty} \left(\min_{1 \leq i_1,\ldots,i_{n}\leq 2} \|B_{i_1}\cdots B_{i_n}\| \right)^{\frac{1}{n}}\leq \sqrt{\lambda\delta}.\]
  Hence for $0<s \leq 1$
\begin{eqnarray*}\lefteqn{\mathbf{R}_q(B_1,B_2,p,s)} & & \\
& =& \lim_{n \to \infty}\frac{1}{n}\log \left(\sum_{i_1,\ldots,i_n=1}^2 \left\|B_{i_1}\cdots B_{i_n}\right\|^{s(1-q)}p_{i_1}^q \cdots p_{i_n}^q\right)\\
&\geq& \lim_{n \to \infty}\frac{1}{n}\log \left(\max_{1 \leq i_1,\ldots,i_n \leq 2}\left( \left\|B_{i_1}\cdots B_{i_n}\right\|^{s(1-q)}\right)\min\left\{p^{qn},(1-p)^{qn}\right\}\right)\\
&=& \lim_{n \to \infty}\frac{1}{n}\log \left(\left(\min_{1 \leq i_1,\ldots,i_n \leq 2} \left\|B_{i_1}\cdots B_{i_n}\right\|\right)^{s(1-q)}\min\left\{p^{qn},(1-p)^{qn}\right\}\right)\\
&\geq & \frac{s(1-q)}{2} \log(\lambda\delta) + \log\min\{p^q,(1-p)^q\}.\end{eqnarray*}
If $1 \geq s>q\log \min\{p,1-p\} / (q-1)\log \sqrt{\lambda\delta} \in (0,1)$ then this last term exceeds $0$ and therefore $\mathfrak{r}_q(B_1,B_2,p)\geq s$. Hence in view of Corollary \ref{co:cks}
\[\mathfrak{r}_q(B_1,B_2,p) \leq \frac{q \log \min\{p,1-p\}}{(q-1)\log \sqrt{\lambda\delta}}.\]
As was previously noted, by taking the integer $k$ in the definition of $B_2$ arbitrarily large we may take $(B_1,B_2)$ as above arbitrarily close to $(A_1,A_2)$, and it follows that
\begin{align*}\liminf_{(B_1,B_2) \to (A_1,A_2)} \mathfrak{r}_q(B_1,B_2,p) &\leq  \frac{2q \log \min\{p,1-p\}}{(q-1)\log (\lambda\delta)}\\&<\frac{\log(p^q+(1-p)^q)}{(q-1)\log \lambda}\leq \mathfrak{r}_q(A_1,A_2,p) \end{align*}
where we have used \eqref{eq:incidental}, so that $\mathfrak{r}_q$ is discontinuous at $(A_1,A_2)$ as claimed. This completes the proof of (i).
\end{proof}

The proof of (ii) uses closely analogous estimates, but we require an additional result relating to Conjecture \ref{co:bf}. The following result is a more specialised reworking of one half of \cite[Proposition 7.6]{BoMo15}.
\begin{lemma}\label{le:bf}
Suppose that Conjecture \ref{co:bf} is true. Then there exist $\varepsilon,\kappa>0$ and a set $X \subset M_2(\mathbb{R})^2$ with positive Lebesgue measure such that for all $(A_1,A_2) \in X$ we have
\[\|A_1\|,\|A_2\|<\frac{1}{2},\]
\[ \underline{\varrho}(A_1,A_2)\geq e^{-\kappa},\]
but such that in every open neighbourhood of $(A_1,A_2)$ we may find $(B_1,B_2)$  such that for a certain integer $k \geq 1$ depending on $(B_1,B_2)$
\[\lim_{n \to \infty}\left\|B_1^nB_2^kB_1^n\right\|^{\frac{1}{2n+k}}=\underline{\varrho}(B_1,B_2) \leq e^{-\varepsilon-\kappa}. \]
\end{lemma}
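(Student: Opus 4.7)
The plan is to combine the Bochi--Fayad conjecture with a swap construction on a rescaled version of the setup. The key observation is that if the two matrices are scaled by \emph{different} factors $\mu_1<\mu_2<\tfrac{1}{2}$, then the determinant lower bound $\|A_w\|\geq\sqrt{|\det A_w|}=\mu_1^{n_1}\mu_2^{n_2}$ (where $n_i$ counts the occurrences of the symbol $i$ in the word $w$) is minimised over words of length $n$ at an all-$A_1$ word and gives the value $\mu_1$; resistance of the underlying determinant-one pair raises the lower bound on $\underline{\varrho}(A_1,A_2)$ strictly above $\mu_1$, while a small perturbation enabling a cancellation in products of the form $B_1^nB_2^kB_1^n$ pushes $\underline{\varrho}$ back down to $\mu_1$, producing a uniform discontinuity.

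Concretely, assuming Conjecture \ref{co:bf} and invoking a countable covering argument, I first fix resistance parameters $(c_0,\varepsilon_0,\lambda_0)$ for which the set $Y$ of $(c_0,\varepsilon_0,\lambda_0)$-resistant pairs in $\mathcal{H}\times\mathcal{E}$ (with $\tilde A_2$ having irrational rotation number) has positive Lebesgue measure. I then choose scaling factors $0<\mu_1<\mu_2<\tfrac{1}{2}$ satisfying $\lambda_0>(\mu_2/\mu_1)^{\varepsilon_0}$ (arranged by taking $\mu_2/\mu_1$ sufficiently close to $1$), and define $X$ as the image under $(\mu_1',\tilde A_1,\mu_2',\tilde A_2)\mapsto(\mu_1'\tilde A_1,\mu_2'\tilde A_2)$ of $I\times Y\times J$, where $I,J$ are small open intervals containing $\mu_1,\mu_2$ respectively. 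Since this map has nonvanishing Jacobian, $X$ has positive Lebesgue measure in $M_2(\mathbb{R})^2$ and every element satisfies $\|A_i\|<\tfrac{1}{2}$. To verify $\underline{\varrho}(A_1,A_2)\geq e^{-\kappa}:=\mu_1^{1-\varepsilon_0}\mu_2^{\varepsilon_0}$ for each $(A_1,A_2)\in X$, I split the set of words into two classes: for $w$ with $n_2\leq\varepsilon_0 n$, resistance combined with the trivial bound $\mu_1^{n_1}\mu_2^{n_2}\geq\mu_1^n$ gives $\|A_w\|\geq c_0(\lambda_0\mu_1)^n$; for the remaining words, the determinant bound together with the monotonicity of $t\mapsto\mu_1^{1-t}\mu_2^t$ on $[0,1]$ gives $\|A_w\|\geq(\mu_1^{1-\varepsilon_0}\mu_2^{\varepsilon_0})^n$. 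Taking $n$-th roots and passing to the limit, $\underline{\varrho}(A_1,A_2)\geq\min(\lambda_0\mu_1,\mu_1^{1-\varepsilon_0}\mu_2^{\varepsilon_0})=e^{-\kappa}$ by the choice of $\lambda_0$.

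For the perturbation, since $\tilde A_2\in\mathcal{E}$ is conjugate to a rotation of (generically) irrational angle $\theta$, the entry $(\tilde A_2^k)_{11}$ in the eigenbasis of $\tilde A_1$ is a trigonometric polynomial in $k\theta$ and takes values arbitrarily close to zero along a suitable sequence $k\to\infty$. Given any open neighbourhood $U$ of $(A_1,A_2)$, I pick such a large $k$ and invoke the implicit function theorem for the codimension-one condition $(\tilde B^k)_{11}=0$ in the $3$-manifold $\mathrm{SL}_2(\mathbb{R})$ to obtain $\tilde B_2\in\mathcal{E}$ within $O(1/k)$ of $\tilde A_2$ for which this entry vanishes exactly. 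Setting $B_1:=A_1$ and $B_2:=\mu_2\tilde B_2$, a direct computation in $B_1$'s eigenbasis --- entirely parallel to the one carried out in the proof of (i) --- gives $\|B_1^nB_2^kB_1^n\|=\mu_1^{2n}\mu_2^k\cdot O(1)$ as $n\to\infty$, whence $\lim_{n\to\infty}\|B_1^nB_2^kB_1^n\|^{1/(2n+k)}=\mu_1=e^{-\kappa-\varepsilon}$ with $\varepsilon:=\varepsilon_0\log(\mu_2/\mu_1)>0$ uniform on $X$. The matching lower bound $\underline{\varrho}(B_1,B_2)\geq\mu_1$ follows from the determinant inequality minimised at all-$B_1$ words, so the limit equals $\underline{\varrho}(B_1,B_2)$ and this common value is $\leq e^{-\kappa-\varepsilon}$ as required.

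The principal obstacle is the perturbation step: one must verify that for arbitrarily large $k$ the values $(\tilde A_2^k)_{11}$ tend to zero \emph{and} that the gradient of $\tilde B\mapsto(\tilde B^k)_{11}$ does not vanish tangentially to $\mathcal{E}$ at $\tilde A_2$, so that the implicit function theorem yields the required $O(1/k)$-perturbation $\tilde B_2$. Both are generic conditions in $\mathcal{E}$ that can be arranged by further restricting $X$ to an appropriate full-measure subset at the outset.
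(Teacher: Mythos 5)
Your overall strategy coincides with the paper's: scale a resistant pair $(\tilde A_1,\tilde A_2)\in\mathcal{H}\times\mathcal{E}$ by two different factors $\mu_1<\mu_2$, bound $\underline{\varrho}(A_1,A_2)$ from below by combining resistance (for words containing few occurrences of the second letter) with the determinant bound $\|A_w\|\geq\sqrt{|\det A_w|}$ (for the remaining words), and then destroy this bound by perturbing the elliptic matrix so that some power of it carries the expanding eigenvector of $A_1$ onto the contracting one, which makes $\|B_1^nB_2^kB_1^n\|$ collapse to $(\mu_1')^{2n}(\mu_2')^k\cdot O(1)$. The computation of the limit and the matching determinant lower bound for $\underline{\varrho}(B_1,B_2)$ are exactly as in the paper. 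Your uniformization of the resistance parameters by a countable union over rational $(c,\varepsilon,\lambda)$ is a legitimate alternative to the paper's route, which instead proves the strict pointwise inequality $\underline{\varrho}(A_1,A_2)>\sqrt{\det A_1}$ on a full-measure set and extracts uniform $\kappa,\varepsilon$ afterwards by a measure-theoretic exhaustion.

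Two points of execution need attention. First, the step you flag as the principal obstacle dissolves if you perturb only the rotation angle within the fixed conjugacy class rather than invoking the implicit function theorem on all of $\mathcal{E}$ (where you would have to control a gradient for infinitely many $k$ on a set fixed in advance). Writing $\tilde A_2=PR_\theta P^{-1}$ and working in the eigenbasis $(u,v)$ of $\tilde A_1$, the function $\psi\mapsto(PR_\psi P^{-1})_{11}$ equals $\langle P^{*}e_1,R_\psi P^{-1}e_1\rangle=a\cos\psi+b\sin\psi$ with $(a,b)\neq(0,0)$, so its zero set is $\psi_0+\pi\mathbb{Z}$; for each large $k$ you may choose $\theta'$ within $\pi/k$ of $\theta$ with $k\theta'\in\psi_0+\pi\mathbb{Z}$, and $\tilde B_2:=PR_{\theta'}P^{-1}$ then satisfies the vanishing condition exactly. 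This is what the paper does, and it removes both the equidistribution argument and the genericity restrictions on $X$. Second, your $\kappa$ and $\varepsilon$ are defined from the centre points $\mu_1,\mu_2$ of the intervals $I,J$, whereas the elements of $X$ carry scaling factors $\mu_1'\in I$, $\mu_2'\in J$, for which the lower bound is $(\mu_1')^{1-\varepsilon_0}(\mu_2')^{\varepsilon_0}$ and the collapsed value is $\mu_1'$; as written these can land on the wrong sides of $e^{-\kappa}$ and $e^{-\kappa-\varepsilon}$. This is repaired by shrinking $I,J$ until $\sup I<\inf J$ and $\sup I<\inf_{I\times J}(\mu_1')^{1-\varepsilon_0}(\mu_2')^{\varepsilon_0}$, and then setting $e^{-\kappa-\varepsilon}:=\sup I$ and $e^{-\kappa}:=\inf_{I\times J}(\mu_1')^{1-\varepsilon_0}(\mu_2')^{\varepsilon_0}$. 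With these adjustments your argument is complete.
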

\begin{proof}
Let us first define
\[Z:=\left\{(\alpha H, \beta R) \colon H \in \mathcal{H}, R \in \mathcal{E}\text{ and }0<\alpha<\beta<\frac{1}{2\|H\|}.\right\}\]
Clearly this is an open subset of $M_2(\mathbb{R})^2$, and for every $(A_1,A_2) \in Z$ both of the matrices $A_i$ have positive determinant and have norm strictly less than one half. By the hypothesis that Conjecture \ref{co:bf} is true, the set of all $(\alpha H, \beta R) \in Z$ such that $(H,R)$ is resistant has full Lebesgue measure in $Z$, and hence in particular has positive Lebesgue measure in $M_2(\mathbb{R})^2$.

We first claim that for all $(A_1,A_2)=(\alpha H, \beta R)$ such that $(H,R)$ is resistant we have $\underline{\varrho}(A_1,A_2)>\sqrt{\det A_1}$. Indeed, suppose that $(H,R)$ is $(c,\lambda,\varepsilon)$-resistant where $c,\varepsilon>0$ and $\lambda>1$. If $A_{i_1},\ldots,A_{i_n}$ contains at most $\varepsilon n$ instances of $A_2$ then we have
\[\left\|A_{i_1}\cdots A_{i_n}\right\| \geq c\lambda^n \alpha^n\]
since $\|A_{i_1}\cdots A_{i_n}\|$ is at least $\alpha^n$ times the norm of a product of $n$ of the matrices $H$,$R$ in which at most $\varepsilon n$ matrices are equal to $R$. On the other hand if the product $A_{i_1}\cdots A_{i_n}$ contains at least $\varepsilon n$ instances of $A_2$ then we have
\[\left\|A_{i_1}\cdots A_{i_n}\right\| \geq \sqrt{|\det A_{i_1}\cdots A_{i_n}|} \geq \alpha^{(1-\varepsilon)n}\beta^{\varepsilon n},\]
and therefore
\[\underline{\varrho}(A_1,A_2) =\lim_{n \to \infty} \inf_{1 \leq i_1,\ldots,i_n \leq 2} \left\|A_{i_1}\cdots A_{i_n}\right\|^{\frac{1}{n}} \geq \min\left\{\lambda \alpha, \left(\frac{\beta}{\alpha}\right)^\varepsilon \alpha\right\}>\alpha=\sqrt{\det A_1}\]
as claimed.

We next claim that for every $(A_1,A_2)=(\alpha H, \beta R)\in Z$, we may in every open neighbourhood of $A_2$ find a matrix $B_2$ such that for some integer $k \geq 1$ depending on $B_2$,
\[ \lim_{n \to \infty} \left\|A_1^nB_2^kA_1^n\right\|^\frac{1}{2n+k}=\sqrt{\det A_1}.\]
(We note that in this case necessarily $\underline{\varrho}(A_1,B_2)=\sqrt{\det A_1}$, since clearly any product of $n$ of those two matrices is bounded below in norm by the square root of the determinant, which in turn is bounded below by $(\det A_1)^n$.)
To show this it is sufficient to show that for any fixed $H \in \mathcal{H}$ and $R \in \mathcal{E}$, we may in every open neighbourhood of $R$ find a matrix $R'$ such that for some integer $k \geq 1$ depending on $R'$,
\begin{equation}\label{eq:bo} \lim_{n \to \infty} \left\|H^n(R')^kH^n\right\|^{\frac{1}{2n+k}}=1.\end{equation}
Let us prove this statement. Given $(H,R)\in\mathcal{H}\times\mathcal{E}$ let $\lambda>1$ denote the larger eigenvalue of $H$, and let $u$ and $v$ be eigenvectors of $H$ corresponding respectively to the eigenvalues $\lambda$ and $\lambda^{-1}$. Since $R$ has non-real eigenvalues it is conjugate to a rotation through some particular angle $\theta$. It is easy to see that this implies that in any neighbourhood of $R$ we may find a matrix $R'$, conjugate to a rotation through a different angle, such that $(R')^ku=\gamma v$ for some nonzero real number $\gamma$ and integer $k \geq 1$. In the basis $(u,v)$ we have
\[H^n(R')^kH^n = \left(\begin{array}{cc}\lambda^n&0\\0&\lambda^{-n}\end{array}\right)\left(\begin{array}{cc}0&\delta \\\gamma &\epsilon\end{array}\right)\left(\begin{array}{cc}\lambda^n&0\\0&\lambda^{-n}\end{array}\right)=\left(\begin{array}{cc}0&\delta \\\gamma &\lambda^{-2n}\epsilon\end{array}\right)\]
for some real numbers $\delta,\epsilon$, where the first column of $(R')^k$ reflects the fact that $(R')^ku=\gamma v$. In particular $\|H^n(R')^kH^n\|$ is bounded independently of $n$, and this yields \eqref{eq:bo}.

Summarising the proof so far, we have shown that there is a full-measure subset $Z_1$ of the open set $Z\subset M_2(\mathbb{R})^2$ such that every $(A_1,A_2)\in Z_1$ satisfies $\|A_1\|,\|A_2\|<\frac{1}{2}$ and $\underline{\varrho}(A_1,A_2)>\sqrt{\det A_1}$, and has the property that in every open neighbourhood of $A_2$ we may find $B_2$ such that for some integer $k \geq 1$,
\[\lim_{n \to \infty}\left\|A_1^nB_2^kA_1^n\right\|^{\frac{1}{2n+k}}=\underline{\varrho}(A_1,B_2)=\sqrt{\det A_1}.\]
So, let us choose $\kappa>0$ such that the set
\[Z_2:=\left\{(A_1,A_2) \in Z_1 \colon \underline{\varrho}(A_1,A_1) \geq e^{-\kappa}>\sqrt{\det A_1} \right\}\]
has positive Lebesgue measure, and choose $\varepsilon>0$ such that the set
\[X:=\left\{(A_1,A_2) \in Z_2 \colon \sqrt{\det A_1} <e^{-\kappa-\varepsilon}\right\}\]
has positive Lebesgue measure. The proof is complete. 
\end{proof}
\emph{Remark.} In order to improve Theorem \ref{th:fad}(ii) so as to allow arbitrary $q>1$ it would be sufficient to be able to choose the set $X$ in such a way that the ratio $\varepsilon/\kappa$ is made arbitrarily large. In effect, this asks that we should be able to reduce the second singular value of $A_1$ arbitrarily far without simultaneously reducing $\underline{\varrho}(A_1,A_2)$ by a comparable amount:  in Theorem \ref{th:fad}(i), this effect is achieved by the simple expedient of reducing $\delta$.

\begin{proof}[Proof of Theorem \ref{th:fad}(ii)]
Let $X$, $\varepsilon$, $\kappa$ be as in Lemma \ref{le:bf} and choose $Q:=1+\frac{\kappa}{\varepsilon}>1$. For all $p \in [\frac{1}{2},1)$ and $q>Q$ we have
\[\left(p^q+(1-p)^q\right)^{\frac{1}{q}}<p^{\frac{\kappa}{\kappa+\varepsilon}}\]
since for each fixed $q>1$ the former expression is a convex function of $p$, the latter is a concave function of $p$, the two functions agree at $p=1$ and the former function is strictly less than the latter at $p=\frac{1}{2}$. Rearranging we find that for all such $p$ and $q$
\[\frac{\log p^q}{\log (p^q+(1-p)^q)} <1+\frac{\varepsilon}{\kappa}.\]
Since clearly $e^{-\kappa} \leq \underline{\varrho}(A_1,A_2) \leq \max\{\|A_1\|,\|A_2\|\} <\frac{1}{2}$ we have $\kappa>\log 2$ and therefore 
\begin{equation}\label{eq:noncidental}0<\frac{\log(p^q)}{(1-q)(\varepsilon + \kappa)}<\frac{\log (p^q+(1-p)^q)}{(1-q)\kappa} <\frac{\log (p^q+(1-p)^q)}{(1-q)\log 2} \leq 1\end{equation}
for all $p \in [\frac{1}{2},1)$ and $q>Q$, where we have reused the elementary inequality $p^q+(1-p)^q \geq 2^{1-q}$ which was similarly applied in (i). We will show that for all such $p$ and $q$, every point of $X$ is a point of discontinuity of the map $(A_1,A_2)\mapsto \mathfrak{r}_q(A_1,A_2,p)$.

Let us therefore fix $p$ and $q$ and take $(A_1,A_2) \in X$. For every $s \in (0,1]$ we have
\begin{eqnarray*}\lefteqn{\mathbf{R}_q(A_1,A_2,p,s)} & &\\
&=&\lim_{n \to \infty}\frac{1}{n}\log \left(\sum_{i_1,\ldots,i_n=1}^2 \|A_{i_1}\cdots A_{i_n}\|^{s(1-q)} p_{i_1}^q \cdots p_{i_n}^q\right)\\
&\leq& \lim_{n \to \infty}\frac{1}{n}\log\left( \max_{1 \leq i_1,\ldots,i_n \leq 2} \left(\|A_{i_1}\cdots A_{i_n}\|^{s(1-q)}\right) \sum_{j_1,\ldots,j_n=1}^2 p_{j_1}^q \cdots p_{j_n}^q\right)\\
&=& \lim_{n \to \infty}\frac{1}{n}\log\left( \left( \min_{1 \leq i_1,\ldots,i_n \leq 2} \|A_{i_1}\cdots A_{i_n}\|\right)^{s(1-q)} \sum_{j_1,\ldots,j_n=1}^2 p_{j_1}^q \cdots p_{j_n}^q\right)\\
&=&s(1-q)\log\underline{\varrho}(A_1,A_2) +\log\left(p^q+(1-p)^q\right)\\
&\leq&s(q-1)\kappa +\log\left(p^q+(1-p)^q\right).\end{eqnarray*}

It follows that if $s< \log(p^q+(1-p)^q) / (1-q)\kappa < 1$ then $\mathbf{R}_q(A_1,A_2,p,s)$ is negative, and hence by Corollary \ref{co:cks}
\[\mathfrak{r}_q(A_1,A_2,p) \geq \frac{\log (p^q+(1-p)^q)}{(1-q)\kappa}.\]
On the other hand we may take $(B_1,B_2)$ arbitrarily close to $(A_1,A_2)$ such that
\[\lim_{n \to \infty}\left\|B_1^nB_2^kB_1^n\right\|^{\frac{1}{2n+k}}=\underline{\varrho}(B_1,B_2) < e^{-\varepsilon-\kappa} \]
for some integer $k \geq 1$. In particular we have
\begin{eqnarray*}\lefteqn{\lim_{n \to \infty} \frac{1}{n}\log \max_{1 \leq i_1,\ldots,i_n \leq 2} \left(\left\|B_{i_1}\cdots B_{i_n}\right\|^{s(1-q)}p_{i_1}^q\cdots p_{i_n}^q\right)}& &\\
&\leq& \lim_{n \to \infty} \frac{1}{n}\log \max_{1 \leq i_1,\ldots,i_n \leq 2} \left(\left\|B_{i_1}\cdots B_{i_n}\right\|\right)^{s(1-q)}\\
& & +\lim_{n \to \infty}\frac{1}{n} \log \max_{1 \leq i_1,\ldots,i_n\leq 2}\left(p_{i_1}^q\cdots p_{i_n}^q\right)\\
&=& \lim_{n \to \infty} \frac{s(1-q)}{n}\log \min_{1 \leq i_1,\ldots,i_n \leq 2} \left\|B_{i_1}\cdots B_{i_n}\right\| + q\log p\\
&=&s(1-q)\log\underline{\varrho}(B_1,B_2)+q\log p\end{eqnarray*}
since $p_1:=p \geq \frac{1}{2}\geq 1-p=p_2$, but also
\begin{eqnarray*}\lim_{n \to \infty} \lefteqn{\frac{1}{n}\log \max_{1 \leq i_1,\ldots,i_n \leq 2} \left(\left\|B_{i_1}\cdots B_{i_n}\right\|^{s(1-q)}p_{i_1}^q\cdots p_{i_n}^q\right)}& & \\
&\geq& \lim_{n \to \infty} \frac{1}{2n+k} \log \left(\left\|B_1^nB_2^kB_1^n\right\|^{s(1-q)}p_1^np_2^kp_1^n\right)\\
&=&s(q-1)\log\underline{\varrho}(B_1,B_2) + q\log p,\end{eqnarray*}
and we conclude that the limit
\[\lim_{n \to \infty} \frac{1}{n}\log \max_{1 \leq i_1,\ldots,i_n \leq 2} \left(\left\|B_{i_1}\cdots B_{i_n}\right\|^{s(1-q)}p_{i_1}^q\cdots p_{i_n}^q\right)\]
is equal to $s(q-1)\log \underline{\varrho}(B_1,B_2)+q\log p$.
It follows that for all $s \in (0,1]$
\begin{align*}\mathbf{R}_q(B_1,B_2,p,s) &= \lim_{n \to \infty}\frac{1}{n}\log \left(\sum_{i_1,\ldots,i_n=1}^2 \left\|B_{i_1}\cdots B_{i_n}\right\|^{s(1-q)}p_{i_1}^q \cdots p_{i_n}^q\right)\\
&\geq \lim_{n \to \infty}\frac{1}{n}\log \left(\max_{1 \leq i_1,\ldots,i_n \leq 2}\left( \left\|B_{i_1}\cdots B_{i_n}\right\|^{s(1-q)}p_{i_1}^{q}\cdots p_{i_n}^q\right)\right)\\
&=s(1-q)\log \underline{\varrho}(B_1,B_2) +q\log p\\
&> s(q-1)(\varepsilon+\kappa)+q\log p.\end{align*}
If $1 \geq s>(q\log p)/(1-q)(\varepsilon+\kappa) \in (0,1)$ then $\mathbf{R}_q(B_1,B_2,p,s)>0$, and therefore
\[\mathfrak{r}_q(B_1,B_2,p) \leq \frac{\log (p^q)}{(1-q)(\varepsilon+\kappa)}\]
by Corollary \ref{co:cks}.
Hence
\begin{align*}\liminf_{(B_1,B_2)\to(A_1,A_2)} \mathfrak{r}_q(B_1,B_2,p)&\leq \frac{q\log p}{(1-q)(\varepsilon+\kappa)}\\
&<\frac{\log (p^q+(1-p)^q)}{(1-q)\kappa}\\&\leq \mathfrak{r}_q(A_1,A_2,p)\end{align*}
using \eqref{eq:noncidental}, and $(B_1,B_2) \mapsto \mathfrak{r}_q(B_1,B_2,p)$ is discontinuous at $(A_1,A_2)$ as claimed.
\end{proof}
\section{Acknowledgements}
The author was supported by EPSRC grant EP/L026953/1.
\bibliographystyle{siam}
\bibliography{renyi}
\end{document}